\documentclass[11pt,twoside]{elsarticle}
\usepackage{amssymb,amsmath,amsfonts}
\usepackage{latexsym}

\hyphenation{classes}
\lefthyphenmin=2  \righthyphenmin=3

\usepackage[english]{babel}

\newtheorem{theorem}{Theorem}[section]
\newtheorem{lemma}[theorem]{Lemma}

\newtheorem{prop}[theorem]{Proposition}
\newtheorem{cor}[theorem]{Corollary}

\newtheorem{problem}[theorem]{Problem}
\newtheorem{example}[theorem]{Example}
\newproof{proof}{Proof}


\begin{document}
\title{On the sum of a narrow and a compact operators}

\author{Volodymyr Mykhaylyuk}

\address{Department of Applied Mathematics\\
Chernivtsi National University\\
str.~Kotsyubyns'koho 2, Chernivtsi, 58012 (Ukraine)}

\ead{vmykhaylyuk@ukr.net}

\begin{keyword}
Narrow operators; compact operators

\MSC Primary 46B99; Secondary 47B99.
\end{keyword}

\begin{abstract}
Our main technical tool is a principally new property of compact narrow operators which works for a domain space without an absolutely continuous norm. It is proved that for every K\"{o}the $F$-space $X$ and for every locally convex $F$-space $Y$ the sum $T_1+T_2$ of a narrow operator $T_1:X\to Y$ and a compact narrow operator $T_2:X\to Y$ is a narrow operator. This gives a positive answers to questions asked by M.~Popov and B.~Randrianantoanina (\cite[Problem 5.6 and Problem 11.63]{PR})
\end{abstract}

\maketitle

\section{Introduction}

The notion of narrow operators was introduced by A.~Plichko and M.~Popov in \cite{PP}. This notion was considered as some development of the notion of a compact operator. Narrow operators were studied by many mathematicians (see \cite{PR}). The question of whether a sum of two narrow operators has to be narrow cause a special interest in the investigation of properties of narrow operators. The study of this question was conducted in two directions.  The first of them consists of results on the narrowness of a sum of two narrow operators. It was obtained in \cite{PP} that a sum of two narrow operators on $L_1$ is narrow. The question on the narrowness of a sum of two narrow operators defined on $L_1$ with any range Banach space was arised in this connection (see  \cite{KP1}). In \cite{MMP} the notion of narrow operators defined on K\"{o}the function spaces was  extended to operators defined on vector lattices. Results obtained in \cite{MMP} imply that a sum $T_1+T_2$ of two regular narrow operators $T_1,T_2:X\to Y$ defined on an  order continuous atomless Banach lattice $X$ and with values in an  order continuous Banach lattice $Y$ is narrow.

The second direction of the investigations contains examples of two narrow operators the sum of which is non-narrow.  It was proved in \cite{PP} that
for any rearrangement invariant space $X$ on $[0,1]$ with an unconditional basis every operator on $X$ is a sum of two narrow operators. In \cite{MP}
 M.Popov and the author proved that for any K\"{o}the Banach space $X$ on $[0,1]$, there exist a Banach space $Y$ and narrow operators $T_1,T_2 :X \to Y$ with a non-narrow sum $T = T_1 + T_2$. This answers in the negative the question of V.~Kadets and M.~Popov from \cite{KP1}. Moreover, an example of regular narrow operators $T_1, T_2:L_p\to L_\infty$, where $p\in (1,\infty]$, with a non-narrow sum $T_1+T_2$ was constructed in \cite{MP}.

Furthermore, the narrowness of the sum of a narrow and a compact operators was investigated too. It was proved in \cite{PP} that the sum of a narrow and a compact operators defined on a symmetric Banach space on $[0,1]$ with an absolutely continuous norm is narrow. Since $L_\infty$ is the classical example of a symmetric Banach space the norm of which is not absolutely continuous, the following questions are natural.

\begin{problem} \label{prob:1}(\cite[Problem~5.6]{PR})
Is a sum of two narrow functionals on $L_\infty$ narrow?
\end{problem}

\begin{problem} \label{prob:2}(\cite[Problem~11.63]{PR})
Is a sum of two narrow operators on $L_\infty$, at least one of which is compact, narrow?
\end{problem}

It worth mentioning that a compact operator defined on $L_\infty$ need not be narrow. Moreover, there is a nonnarrow continuous linear functional \cite{MMP}. Thus, instead of asking of whether the sum of a narrow and a compact operators is narrow, one should ask of whether the sum of a narrow and a compact narrow operators is narrow.

In this paper we obtain a principally new property of compact narrow operator which works for a domain space without an absolutely continuous norm. Using this property we give a positive answer to problems \ref{prob:1} and \ref{prob:2}. More precisely, we show that for every K\"{o}the $F$-space $X$ and for every locally convex $F$-space $Y$ the sum of a narrow operator $T_1:X\to Y$ and a compact narrow operator $T_2:X\to Y$ is narrow.

\section{Preliminaries}

For topological vector spaces $X$ and $Y$ by $\mathcal L(X,Y)$ we denote the space of all linear continuous operators $T:X\to Y$.

Let $(\Omega, \Sigma, \mu)$ be a finite atomless measure space, let $\Sigma^+$ be the set of all $A\in\Sigma$ with $\mu(A)>0$ and let   $L_0(\mu)$ be the linear space of all equivalence classes of $\Sigma$-measurable functions $x: \Omega \to \mathbb K$, where $\mathbb K \in \{\mathbb R, \mathbb C\}$.

By $\mathbf{1}_A$ we denote the characteristic function of a set $A \in \Sigma$. The notation $A = B \sqcup C$ means that $A = B \cup C$ and $B \cap C = 0$. By a \textit{sign} we mean any $\{-1,0,1\}$-valued element $x \in L_0(\mu)$. A sign $x$ is called a \textit{sign on a set} $A\in \Sigma$ provided ${\rm supp} \, x = A$. A sign $x \in L_0(\mu)$ is said to be \textit{of mean zero} if $\int xd\mu=0$.

 A complete metric linear space with an invariant metric $\rho(x,y) = \rho(x+z,y+z)$ and the corresponding $F$-norm $\|x\| = \rho(x,0)$ is called an {\it $F$-space}. With no loss of generality we may additionally assume that for every $x\in X$ and every scalar $\alpha$ with $|\alpha|\leq 1$ one has $\|\alpha x\|\leq \|x\|$. An $F$-space $X$ which is a linear subspace of $L_0(\mu)$ is called a \textit{K\"{o}the $F$-space on} $(\Omega, \Sigma, \mu)$ if $\mathbf{1}_\Omega \in X$, and for each $x \in L_0(\mu)$ and $y \in X$ the condition $|x| \leq |y|$ implies $x \in X$ and $\|x\| \leq \|y\|$. If, moreover, $X$ is a Banach space and $X \subseteq L_1(\mu)$ then $X$ is called a \textit{K\"{o}the Banach space on} $(\Omega, \Sigma, \mu)$.

A K\"{o}the $F$-space $X$ on $(\Omega, \Sigma, \mu)$ is said to have an {\it absolutely continuous norm}, if $\lim_{\mu(A) \to 0} \bigl\|x \cdot \mathbf{1}_{A} \bigr\| = 0$ for every $x \in X$, and an \textit{absolutely continuous norm on the unit} if $\lim_{\mu(A) \to 0} \|\mathbf{1}_A\| = 0$.

Let $X$ be a K\"{o}the $F$-space on $(\Omega, \Sigma, \mu)$, and let $Y$ be an $F$-space. An operator $T \in \mathcal L(X,Y)$ is called \textit{narrow} if for every $A \in \Sigma^+$ and $\varepsilon > 0$ there is a mean zero sign $x$ on $A$ with $\|Tx\| < \varepsilon$. An operator $T \in \mathcal L(X,Y)$ is called \textit{strictly narrow} if for every $A \in \Sigma^+$ there is a mean zero sign $x$ on $A$ with $Tx =0$.

Let $\alpha$ be an at most countable ordinal, $(X,\rho)$ be a metric linear space and $(x_\xi:\xi<\alpha)$ be a family of $x_\xi\in X$. Using induction in $\alpha$, we define the sum $\sum\limits_{\xi<\alpha}x_\xi$. If $\alpha=\beta+1$ then $\sum\limits_{\xi<\alpha}x_\xi=\sum\limits_{\xi<\beta}x_\xi+x_\beta$, and if $\alpha$ is a limited ordinal then $\sum\limits_{\xi<\alpha}x_\xi=\lim\limits_{\beta\to\alpha}\sum\limits_{\xi<\beta}x_\xi$. One can analogously introduce the sum $\sum_{i\in I}x_i$ where $I$ is an at most countable well-ordered set.

Let $X$ be an $F$-space and $I$ be a well-ordered set. A family $(x_i:i\in I)$ of $x_i\in X$ is called {\it a transfinite basis of $X$}(see \cite[p.581]{S}) if for every $x\in X$ there exists a unique family $(a_i:i\in I)$ of scalars $a_i\in I$ such that the set $J=\{i\in I:a_i\ne 0\}$ is at most countable and $x=\sum_{i\in J}a_i x_i=\sum_{i\in I}a_i x_i$. Moreover, the family of linear functionals $f_i:X\to\mathbb K$, $f_i(x)=a_i$, where $x=\sum_{j\in I}a_j x_j$, is called {\it the associated family of coefficient functionals}.

\section{A property of compact narrow operators}

\begin{prop} \label{pr:3}
Let $X$ be a K\"{o}the $F$-space on $(\Omega, \Sigma, \mu)$, $Z$ be a set of all signs in $X$, $Y$ be an $F$-space, $T\in\mathcal L(X,Y)$ be a narrow operator such that the set $K=\overline{T(Z)}$ is a compact subset of $Y$ and $\varepsilon>0$. Then there exists a partition $\Omega=\bigsqcup\limits_{k=1}^n A_n$ of $\Omega$ into measurable sets $A_k\in \Sigma$ such that $\|T x\|\leq \varepsilon$ for every $k\leq n$ and every sign $x$ with ${\rm supp}\,x\subseteq A_k$.
\end{prop}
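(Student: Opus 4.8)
The plan is to recast everything through the finitely additive vector measure $\nu(A)=T\mathbf{1}_A\in K$ and the \emph{semivariation} $s(A)=\sup\{\|Tx\|:x\in Z,\ \operatorname{supp}x\subseteq A\}$. From the invariance and the normalization of the $F$-norm one checks that $s$ is monotone ($s(B)\le s(A)$ for $B\subseteq A$) and subadditive ($s(B\sqcup C)\le s(B)+s(C)$, since any sign on $B\sqcup C$ splits as a sum of signs on $B$ and on $C$), and that $s(\Omega)<\infty$ because the continuous $F$-norm is bounded on the compact set $K$. Call a set $A$ \emph{good} if $s(A)\le\varepsilon$ and \emph{bad} otherwise; the goal is precisely to partition $\Omega$ into finitely many good sets. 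The argument rests on two inputs of very different nature: compactness of $K$, which I will turn into an exhaustion (finiteness) statement, and narrowness, which supplies the splitting that the absence of an absolutely continuous norm otherwise denies us.

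The heart of the proof is the following \emph{Exhaustion Lemma}: for every $\gamma>0$ there do not exist infinitely many signs $x_1,x_2,\dots$ with pairwise disjoint supports and $\|Tx_k\|>\gamma$. I would argue by contradiction. Since $Tx_k\in K$ with $K$ compact, after passing to a subsequence $Tx_k\to y_*$ with $\|y_*\|\ge\gamma>0$; refining further I may assume $\|Tx_k-y_*\|<2^{-k}$, so that $\delta_k:=Tx_k-y_*$ satisfies $\sum_k\|\delta_k\|<\infty$ and, by completeness, $\Delta:=\sum_{k\ge1}\delta_k$ exists. Because the $x_k$ have disjoint supports, each partial sum $u_n:=x_1+\dots+x_n$ is again a sign, whence $Tu_n\in K$ and $Tu_n=n\,y_*+\sum_{k=1}^n\delta_k$. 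Extracting a convergent subsequence $Tu_{n_p}\to L\in K$ gives $n_p y_*=Tu_{n_p}-\sum_{k=1}^{n_p}\delta_k\to L-\Delta$, so $(n_py_*)_p$ is Cauchy and $\|(n_{p+1}-n_p)y_*\|\to0$. Writing $m_p:=n_{p+1}-n_p\ge1$ and using $\|\alpha z\|\le\|z\|$ for $|\alpha|\le1$ with $\alpha=1/m_p$, I get $\|y_*\|=\|\tfrac1{m_p}(m_py_*)\|\le\|m_py_*\|\to0$, contradicting $\|y_*\|\ge\gamma$. This scaling step is the decisive point: it replaces the usual normed-space growth of $\|n y_*\|$, which fails in an $F$-space, and it deliberately avoids any ``small measure implies small norm'' reasoning, which is unavailable here.

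With the Exhaustion Lemma in hand, any family of pairwise disjoint bad sets is finite, so I may fix a maximal such family $\{E_1,\dots,E_N\}$. The remainder $R=\Omega\setminus\bigcup_i E_i$ must be good, for otherwise $\{E_1,\dots,E_N,R\}$ would be a larger disjoint bad family; and by maximality no $E_i$ can contain two disjoint bad subsets (else one could split $E_i$ and enlarge the family). Thus the problem reduces to partitioning each \emph{irreducible} bad set $E_i$ — bad, but admitting no two disjoint bad subsets — into finitely many good sets.

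The final and, I expect, the hardest step is to dispatch an irreducible bad $E=E_i$ using narrowness. Given a witnessing sign $x=\mathbf{1}_P-\mathbf{1}_Q$ on $E$ with $\|Tx\|>\varepsilon$, I would apply narrowness to $P$ and to $Q$ to obtain equal-measure splittings $P=P_1\sqcup P_2$, $Q=Q_1\sqcup Q_2$ with $\|\nu(P_1)-\nu(P_2)\|$ and $\|\nu(Q_1)-\nu(Q_2)\|$ arbitrarily small, so that the two signs $\mathbf{1}_{P_1}-\mathbf{1}_{Q_1}$ and $\mathbf{1}_{P_2}-\mathbf{1}_{Q_2}$ live on the \emph{disjoint} sets $P_1\cup Q_1$ and $P_2\cup Q_2$ and each has image close to $\tfrac12 Tx$. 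Iterating this ``copy by splitting'' manoeuvre manufactures disjoint sets carrying signs of controlled image; the Exhaustion Lemma then forbids this from continuing indefinitely, which is exactly the tension that must force $E$ to break into good pieces and the whole recursion to terminate after finitely many stages. The main obstacle is precisely here: narrow splitting only halves the image of the witnessing sign, so one must iterate and balance this loss against the finiteness ceiling provided by the Exhaustion Lemma, all while never invoking measure-smallness (the crux of working without an absolutely continuous norm). Controlling this interplay, and organizing the termination as a transfinite exhaustion that the Exhaustion Lemma collapses to a finite one, is where the real work of the proof will lie.
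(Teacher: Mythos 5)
Your Exhaustion Lemma is correct, and its proof --- summing the disjoint signs into a single sign $u_n=x_1+\dots+x_n\in Z$, extracting a convergent subsequence of $(Tu_{n_p})$, and using $\|\alpha z\|\le\|z\|$ for $|\alpha|\le1$ to force $\|y_*\|\to 0$ --- is a clean substitute for the paper's pigeonhole-and-covering argument with the neighbourhoods $V$, $V_1$, $W$; notably, neither version needs narrowness for this half. The gap is in getting from the Exhaustion Lemma to the partition. First, a local flaw: replacing $E_i$ by two disjoint bad subsets $F_1,F_2$ produces a disjoint bad family that is \emph{not} a superset of $\{E_1,\dots,E_N\}$, so inclusion-maximality does not forbid it; you would need a family of maximal \emph{cardinality}, and ``no infinite disjoint bad family'' does not by itself give the uniform bound on cardinalities needed to know one exists. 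Second, and decisively, the step you yourself flag as ``where the real work will lie'' --- partitioning an irreducible bad set into finitely many good sets --- is precisely the content of the Proposition and is not carried out. Exhibiting a partition requires certifying that certain sets are \emph{good}, a universally quantified statement over all signs supported in them, and nothing in your construction produces such a certificate.

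The paper sidesteps this by arguing by contradiction from the outset, so that the negation --- \emph{every} finite partition of $\Omega$ has a piece supporting a sign of $F$-norm $>\varepsilon$ --- is available as a tool at each stage. It builds by induction, for every $n$, disjoint signs $x_1,\dots,x_n$ on sets $B_1,\dots,B_n$ with $\|Tx_i\|\ge\varepsilon/2$: given $k$ of them, apply the negated conclusion to the partition $\{B_1,\dots,B_k,\Omega\setminus\bigcup_i B_i\}$; if the remainder carries a sign of norm $\ge\varepsilon/2$, adjoin it, and otherwise some $B_i$ carries a sign $z$ with $\|Tz\|>\varepsilon$, which narrowness splits into two disjoint signs of norm $\ge\varepsilon/2$ (your ``copy by splitting'' move, used exactly once per step rather than iterated). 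This yields arbitrarily large disjoint families of $\varepsilon/2$-signs, which your Exhaustion Lemma with $\gamma=\varepsilon/2$ then contradicts. So your two ingredients are the right ones, but the bookkeeping that makes them meet --- applying the negated conclusion to the evolving partition instead of trying to exhibit good sets directly --- is missing.
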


\begin{proof} Assume on the contrary that the hypothesis is false. Using induction in $n\in\mathbb N$ we show that for each $n\in\mathbb N$ there exists a collection $(x_k)_{k=1}^{n}$ of signs $x_k$ on mutually disjoint sets $B_k$ such that $\|T x_k\|\geq \frac{\varepsilon}{2}$ for every $k\leq n$. For $n=1$ the statement is obvious. Let it be true for $n=k$. Prove it for $n=k+1$.

Let $(x_i)_{i=1}^{k}$ be a collection of signs $x_i$ on mutually disjoint sets $B_i$ such that $\|T x_i\|\geq \frac{\varepsilon}{2}$ for every $i\leq k$. Set $A=\Omega\setminus \bigsqcup\limits_{i=1}^k B_i$. If there exist a measurable set $B_{k+1}\subseteq A$ and a sign $x_{k+1}$ on $B_{k+1}$ such that $\|Tx_{k+1}\|\geq \frac{\varepsilon}{2}$ then the collection $(x_i)_{i=1}^{k+1}$ is as desired. Otherwise, by the assumption there exist $i\leq k$, a measurable set $B\subseteq B_i$ and a sign $z$ on $B$ such that $\|Tz\| > \varepsilon$. Say, let $i=k$. Using the narrowness of $T$ we decompose the set $B$ into measurable sets $B'_{k}$ and $B'_{k+1}$ such that $\|T z_k\|\geq \frac{\varepsilon}{2}$ and $\|T z_{k+1}\|\geq \frac{\varepsilon}{2}$, where $z_k=x_k\cdot \mathbf{1}_{B_k}$ and $z_{k+1}=x_k\cdot \mathbf{1}_{B_{k+1}}$. Then the collection $x_1, \dots, x_{k-1}, z_k, z_{k+1}$ is as desired.

Set  $V=\{y\in Y: \|y\|<\frac{\varepsilon}{4}\}$. Using the compactness of $K$ we choose $n\in\mathbb N$ and a circled neighborhood of zero $V_1$ in $Y$ such that $K+V_1\subseteq nV$. We take a 0-neighborhood $W\subseteq V$ in $Y$ and a collection $(y_k)_{k=1}^{m}$ of points $y_k\in K$ such that $\mathop{\underbrace{W+\dots +W}}\limits_{n}\subseteq V_1$ and $K\subseteq \bigcup\limits_{k=1}^m (y_k+W)$. Denote $N=mn$. Using the claim proved above we find a collection $(x_i)_{i=1}^{N}$ of signs $x_i$ on mutually disjoint sets $B_i$ such that $\|T x_i\|\geq \frac{\varepsilon}{2}$ for every $i\leq N$. Since $\{Tx_i:1\leq i\leq mn\}\subseteq \bigcup\limits_{k=1}^m (y_k+W)$, there exists an integer $k\leq m$ such that $|\{i\leq N: Tx_i\in y_k+W\}|\geq n$. Denote $y_0=y_k$. Without loss of generality we may and do assume that $Tx_i\in y_0+W$ for every $i\leq n$. Since $\|T x_1\|\geq \frac{\varepsilon}{2}$ and $W\subseteq V=\{y\in Y: \|y\|<\frac{\varepsilon}{4}\}$, $\|y_0\|\geq \frac{\varepsilon}{4}$, that is $y_0\not\in V$.

Let $z=\sum\limits_{i=1}^n x_i$. Since the sets $B_i$ are mutually disjoint, $z\in Z$ and $y=Tz=\sum\limits_{i=1}^n Tx_i\in K$. Moreover, $$y\in ny_0+\mathop{\underbrace{W+\dots +W}}\limits_{n}\subseteq ny_0+V_1.$$
Now we have $ny_0\in K-V_1=K+V_1\subseteq nV$, that is $y_0\in V$, which implies a contradiction.
$\Box$\end{proof}

\section{The sum of a narrow and a finite rank operators}

\begin{prop} \label{pr:1}
Let $X$ be a K\"{o}the $F$-space on $(\Omega, \Sigma, \mu)$, $Z$ be the set of all signs in $X$, $Y_1, Y_2$ be $F$-spaces, $T_1\in\mathcal L(X,Y_1)$ be a narrow operator, $T_2\in\mathcal L(X,Y_2)$ be an operator such that the set $K=T_2(Z)$ is relatively compact in $Y_2$, $\sigma>0$, $0<\gamma<\varepsilon$ and $\delta>0$ such that  $\|T_2\mathbf{1}_A\|\leq \frac{\gamma}{2}$ if $\mu(A)\leq \delta$. Then there exists a mean zero sign $x$ on $\Omega$ such that $\|T_1x\|\leq \sigma$ and $\|T_2x\|\leq \varepsilon$.
\end{prop}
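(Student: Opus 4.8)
The plan is to split the simultaneous estimate into two essentially independent tasks — controlling $T_1$ by narrowness and controlling $T_2$ by compactness — and then to couple them.

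First I would use the atomlessness of $\mu$ to partition $\Omega=\bigsqcup_{j=1}^{N}A_j$ into finitely many measurable sets of measure at most $\delta$, so that $\|T_2\mathbf 1_A\|\le\frac{\gamma}{2}$ for every $A\subseteq A_j$. I intend to look for $x$ of the form $x=\sum_{j=1}^{N}x_j$, where each $x_j$ is a mean zero sign on $A_j$; any such $x$ is automatically a mean zero sign on $\Omega$. The $T_1$-part is then easy: by narrowness of $T_1$ I choose on each $A_j$ a mean zero sign $x_j$ with $\|T_1x_j\|\le\frac{\sigma}{N}$, whence $\|T_1x\|\le\sum_{j=1}^{N}\|T_1x_j\|\le\sigma$ by subadditivity of the $F$-norm. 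This reduces the whole statement to arranging, inside the large supply of signs that narrowness provides on each $A_j$, that $\|T_2x\|=\bigl\|\sum_{j=1}^{N}T_2x_j\bigr\|\le\varepsilon$.

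For the $T_2$-part the only available tool is the relative compactness of $K=T_2(Z)$, since $T_2$ need not be narrow and $\|T_2x_j\|$ cannot be pushed below the threshold $\gamma$. Writing $v_j=T_2x_j\in K$, the natural approach, in the spirit of the pigeonhole argument of Proposition \ref{pr:3}, is to cover $\overline K$ by finitely many sets of small diameter, group the pieces according to which cell $v_j$ lands in, and cancel them in pairs of opposite orientation: replacing $x_i,x_j$ lying in a common cell by $x_i-x_j$ yields a mean zero sign on $A_i\sqcup A_j$ whose $T_2$-image $v_i-v_j$ has small norm, while its $T_1$-image $T_1x_i-T_1x_j$ stays controlled. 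Iterating this merging at finer and finer scales, using total boundedness of $\overline K$, pushes the surviving images toward $0$.

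The step I expect to be the main obstacle is precisely this $T_2$-cancellation, in its \emph{coupling} with the $T_1$-estimate. A fixed finite cover leaves one uncancelled representative per cell, and a naive sign balancing of finitely many vectors of norm at most $\gamma$ can be forced to be of order $\gamma$ times the number of cells, which need not lie below $\varepsilon$. To get around this I would not fix the signs $x_j$ in advance, but exploit the full freedom that atomlessness and the narrowness of $T_1$ provide on each $A_j$: there is a whole symmetric family of admissible mean zero signs $x_j$ with $\|T_1x_j\|$ small, and the associated values $T_2x_j$ sweep out a set rich enough to steer the running partial sum $\sum_{i\le j}T_2x_i$ back toward $0$ at each step. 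Since every increment has norm at most $\gamma$ and its direction can be chosen, while compactness keeps all partial sums inside a fixed compact set, the terminal value $T_2x$ can be held within $\varepsilon$, with the last genuinely uncancelled block contributing at most $\gamma<\varepsilon$. Making this steering rigorous — guaranteeing that the admissible signs keeping $T_1$ small still provide enough variation of $T_2$ to carry out the cancellation — is the delicate heart of the argument.
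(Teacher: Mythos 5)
The reduction of the $T_1$-estimate is fine, but the $T_2$-cancellation step --- which you correctly identify as the heart of the matter --- is a genuine gap, not a deferred technicality. Your proposed mechanism is to choose, on each $A_j$, an admissible mean zero sign $x_j$ whose image $T_2x_j$ points in a direction that steers the running partial sum back towards $0$. Nothing in the hypotheses supplies such directional freedom: relative compactness of $K=T_2(Z)$ says nothing about the \emph{richness} of the set of values $T_2x_j$ as $x_j$ ranges over the admissible signs on $A_j$ (up to the reflection $x_j\mapsto -x_j$ this set could consist of essentially a single vector $v_j$), and the observation that all partial sums $T_2\bigl(\sum_{i\le j}\theta_ix_i\bigr)$ lie in the compact set $\overline{K}$ only yields boundedness, not smallness. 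With only signs $\theta_j\in\{-1,1\}$ at your disposal the final sum $\sum_j\theta_jv_j$ cannot in general be made small, and your own pairing-by-cells argument leaves one uncancelled vector of norm up to $\gamma$ \emph{per cell}, i.e.\ an error of order $m\gamma$ with $m$ the number of cells, which need not lie below $\varepsilon$. Note also that $T_2$ is not assumed narrow in this proposition, so Proposition~\ref{pr:3} is not available to shrink the images of signs supported in small pieces.

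The missing idea, which is how the paper actually exploits compactness, is a \emph{difference trick coupled to independence}: using the narrowness of $T_1$ one builds an infinite sequence of stochastically independent mean zero signs $r_n$ on a set $A$ with $\|T_1r_n\|$ uniformly small; since all $T_2r_n$ lie in the relatively compact set $K$, two of them, $T_2r_{n_1}$ and $T_2r_{m_1}$, are within any prescribed distance of each other, and then $x_1=\frac12(r_{n_1}-r_{m_1})$ is a mean zero sign supported on exactly half of $A$ (by independence) with $\|T_2x_1\|$ small \emph{and} $\|T_1x_1\|$ still small. Iterating on the complements produces disjoint sets $B_n$ of measure $\mu(\Omega)/2^n$ carrying mean zero signs $x_n$ with $\|T_1x_n\|\le\sigma/2^n$ and $\|T_2x_n\|\le(\varepsilon-\gamma)/2^n$; after finitely many steps the leftover set has measure at most $\delta$ and is handled by the hypothesis $\|T_2\mathbf 1_A\|\le\frac{\gamma}{2}$. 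Without some such device that makes each individual $\|T_2x_n\|$ genuinely small (rather than merely bounded by $\gamma$), your scheme cannot close.
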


\begin{proof}
Set $\varepsilon_1=\varepsilon-\gamma$. Using the narrowness of $T_1$ we construct a sequence of probability independent mean zero signs $r_n^{(1)}$ on $A_1=\Omega$ such that $\|T_1 r_n^{(1)}\|\leq \frac{\sigma}{2^2}$ for every $n\in\mathbb N$. Since $K$ is relatively compact, there exist distinct $m_1,n_1\in\mathbb N$ such that for $x_1=\frac{1}{2}(r_{n_1}^{(1)}-r_{m_1}^{(1)})$ we have $\|T_2x_1\|<\frac{\varepsilon_1}{2}$. Set $B_1={\rm supp}\, x_1$ and $A_2=A_1\setminus B_1$. It is clear that $\mu(A_2)=\mu(B_1)=\frac{1}{2}\mu(\Omega)$, $x_1$ is a mean zero sign on $B_1$ and $\|T_1x_1\|\leq \frac{\sigma}{2}$. Now we choose a sequence of probabilistic independent mean zero signs $r_n^{(2)}$ on $A_2$ such that $\|T_1 r_n^{(2)}\|\leq \frac{\sigma}{2^3}$ for every $n\in\mathbb N$, and find $n_2,m_2\in\mathbb N$ so that for $x_2=\frac{1}{2}(r_{n_2}^{(2)}-r_{m_2}^{(2)})$ we have $\|T_2x_2\|<\frac{\varepsilon_1}{2^2}$. Set $B_2={\rm supp}\, x_2$ and $A_3=A_2\setminus B_2$. It is clear that $\mu(A_2)=\mu(B_1)=\frac{1}{2^2}\mu(\Omega)$, $x_2$ is a mean zero sign on $B_2$ and $\|T_1x_2\|\leq \frac{\sigma}{2^2}$.

Repeating this process we obtain a sequence $(B_n)_{n=1}^{\infty}$ of measurable sets $B_n$ and a sequence $(x_n)_{n=1}^{\infty}$ of mean zero signs $x_n$ on $B_n$ which satisfy the following conditions:

$(a)$\,\,\, $B_n\cap B_m=\emptyset$ for arbitrary distinct $n,m\in\mathbb N$;

$(b)$\,\,\, $\mu(B_n)=\frac{\mu(\Omega)}{2^{n}}$ for every $n\in\mathbb N$;

$(c)$\,\,\, $\|T_1x_n\|\leq \frac{\sigma}{2^{n}}$ and $\|T_2x_n\|\leq \frac{\varepsilon_1}{2^{n}}$ for every $n\in\mathbb N$.

Using the proposition assumption, choose $m\in\mathbb N$ so that $\frac{\mu(\Omega)}{2^m}\leq \delta$. Set $B=\bigcup\limits_{n=1}^m B_n$ and
choose a mean zero sign $z$ on $A=\Omega\setminus B$ such that $\|T_1z\|\leq \frac{\sigma}{2^m}$. Note that $\|T_2 z\|\leq \gamma$ and $x=\sum\limits_{n=1}^m x_n +z$ is a mean zero sign on $\Omega$. Now we have
$$
\|T_1x\|\leq \sum\limits_{n=1}^m\|T_1 x_n\| +\|T_1 z\|\leq
\sum\limits_{n=1}^m\frac{\sigma}{2^n}+ \frac{\sigma}{2^m}=\sigma
$$
and
$$
\|T_2x\|\leq \sum\limits_{n=1}^m\|T_2 x_n\|+ \|T_2 z\| \leq
\sum\limits_{n=1}^m\frac{\varepsilon_1}{2^n}+\gamma\leq\varepsilon_1+\gamma=\varepsilon.
$$

$\Box$\end{proof}

\begin{cor} \label{cor:1}
Let $X$ be a K\"{o}the $F$-space on $(\Omega, \Sigma, \mu)$, $Z$ be the set of all signs in $X$, $Y$ be an $F$-space, $T_1\in\mathcal L(X,Y)$ be a narrow operator, $T_2\in\mathcal L(X,Y)$ be an operator such that the set $T_2(Z)$ is relatively compact in $Y$ and $\lim\limits_{\mu(A)\to 0}\|T_2\mathbf{1}_A\|=0$. Then  $T=T_1+T_2$ is a narrow operator.
\end{cor}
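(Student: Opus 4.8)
The plan is to deduce the narrowness of $T=T_1+T_2$ directly from Proposition~\ref{pr:1} by localizing the whole construction to an arbitrary measurable set. Fix $A\in\Sigma^+$ and $\varepsilon_0>0$; to prove that $T$ is narrow it suffices to produce a mean zero sign $x$ on $A$ with $\|Tx\|<\varepsilon_0$. Consider the localized measure space $(A,\Sigma_A,\mu_A)$, where $\Sigma_A=\{B\in\Sigma:B\subseteq A\}$ and $\mu_A=\mu|_{\Sigma_A}$; it is again a finite atomless measure space. Let $X_A=\{y\cdot\mathbf{1}_A:y\in X\}$, which is a K\"othe $F$-space on $(A,\Sigma_A,\mu_A)$ since $\mathbf{1}_A=\mathbf{1}_\Omega\cdot\mathbf{1}_A\in X$ and the ideal property is inherited from $X$. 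Write $S_i$ for the restriction of $T_i$ to $X_A$, so that $S_i\in\mathcal L(X_A,Y)$ for $i=1,2$.

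Next I would verify that $S_1,S_2$ satisfy the hypotheses of Proposition~\ref{pr:1} on $(A,\Sigma_A,\mu_A)$. The operator $S_1$ is narrow: for $B\in\Sigma_A^+\subseteq\Sigma^+$ the narrowness of $T_1$ already furnishes mean zero signs on $B$ with arbitrarily small $T_1$-image, and such signs lie in $X_A$. The set $Z_A$ of all signs in $X_A$ consists precisely of the signs in $X$ supported on $A$, hence $Z_A\subseteq Z$ and $S_2(Z_A)\subseteq T_2(Z)$ is relatively compact in $Y$. Finally, using $\lim_{\mu(B)\to 0}\|T_2\mathbf{1}_B\|=0$, fix any $\gamma$ with $0<\gamma<\varepsilon_0/4$ and choose $\delta>0$ so that $\|T_2\mathbf{1}_B\|\le\gamma/2$ whenever $\mu(B)\le\delta$; in particular this holds for all $B\in\Sigma_A$.

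Now I would apply Proposition~\ref{pr:1} to $S_1,S_2$ on $(A,\Sigma_A,\mu_A)$ with its parameters chosen as $\sigma=\varepsilon_0/4$ and $\varepsilon=\varepsilon_0/4$ (in the notation of that proposition), together with the $\gamma,\delta$ fixed above; note that $\gamma<\varepsilon_0/4=\varepsilon$, as required. This yields a mean zero sign $x$ on $A$ with $\|T_1x\|=\|S_1x\|\le\varepsilon_0/4$ and $\|T_2x\|=\|S_2x\|\le\varepsilon_0/4$. By the triangle inequality for the $F$-norm,
$$
\|Tx\|\le\|T_1x\|+\|T_2x\|\le\frac{\varepsilon_0}{4}+\frac{\varepsilon_0}{4}=\frac{\varepsilon_0}{2}<\varepsilon_0 .
$$
Since $A\in\Sigma^+$ and $\varepsilon_0>0$ were arbitrary, $T$ is narrow.

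I do not expect a serious obstacle: the substantive content is already carried by Proposition~\ref{pr:1}, and the only genuine step is the localization to $A$, needed because the proposition produces a sign on the full domain whereas narrowness requires a sign on each $A\in\Sigma^+$. The points to be careful about are that relative compactness of $T_2(Z)$ and the condition $\|T_2\mathbf{1}_B\|\to 0$ both pass to subsets of $A$ (which they do automatically), and that the splitting of the target $\varepsilon_0$ between $T_1$ and $T_2$ respects the constraint $\gamma<\varepsilon$ appearing in the hypotheses of Proposition~\ref{pr:1}.
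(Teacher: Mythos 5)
Your proof is correct and follows exactly the route the paper intends: the corollary is stated without proof as an immediate consequence of Proposition~\ref{pr:1}, and the only missing ingredient is the localization to an arbitrary $A\in\Sigma^+$, which you carry out carefully (including checking that $X_A$ is again a K\"othe $F$-space and that relative compactness and the $\|T_2\mathbf{1}_B\|\to 0$ condition pass to the restriction). Your splitting of $\varepsilon_0$ so that the constraint $\gamma<\varepsilon$ of Proposition~\ref{pr:1} is respected and the final bound is strict is also handled properly.
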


We need the next well-known lemma (see \cite[lemma 2.1.2]{KK}, \cite[lemma 10.20]{PR}).

\begin{lemma} \label{l:1}
Let $(x_i)_{i=1}^n$ be a finite sequence of vectors in a finite dimensional normed space $X$ and $(\lambda_i)_{i=1}^n$ be reals with $0\leq \lambda_i\leq n$ for each $i$. Then there exists a sequence $(\theta_i)_{i=1}^n$ of numbers $\theta_i\in\{0,1\}$ such that $$\|\sum\limits_{k=1}^{n}(\lambda_i-\theta_i)x_i\|\leq \frac{{\rm dim}X}{2}\max\limits_i\|x_i\|.$$
\end{lemma}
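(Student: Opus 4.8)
The plan is to run the standard \emph{vertex rounding} argument, exploiting that in a finite dimensional space the linear constraint $\sum_i\mu_i x_i=\mathrm{const}$ imposes only $\dim X$ scalar equations. First I would fix $d=\dim X$ and introduce the constraint polytope
$$
P=\Bigl\{\mu=(\mu_i)_{i=1}^n\in[0,1]^n:\ \sum_{i=1}^n\mu_i x_i=\sum_{i=1}^n\lambda_i x_i\Bigr\}.
$$
Since $(\lambda_i)_{i=1}^n$ lies in the cube $[0,1]^n$, the set $P$ is nonempty; it is also closed and bounded, hence a convex polytope, and so it possesses an extreme point $\mu^*=(\mu_i^*)_{i=1}^n$.

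The key step is to bound the number of genuinely fractional coordinates of $\mu^*$. The polytope $P$ is cut out of $\mathbb R^n$ by the $n$ pairs of box inequalities $0\le\mu_i\le1$ together with the linear system $\sum_i\mu_i x_i=\sum_i\lambda_i x_i$; because the $x_i$ live in the $d$-dimensional space $X$, this system has rank at most $d$. At an extreme point of a polytope in $\mathbb R^n$ the normals of the active constraints must span $\mathbb R^n$, and the equality block contributes rank at most $d$, so at least $n-d$ of the box constraints are active at $\mu^*$. Thus at least $n-d$ of the coordinates $\mu_i^*$ lie in $\{0,1\}$, and the set $F=\{i:\mu_i^*\in(0,1)\}$ of fractional coordinates satisfies $|F|\le d$.

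Then I would round componentwise: put $\theta_i=\mu_i^*$ whenever $\mu_i^*\in\{0,1\}$, and for $i\in F$ take $\theta_i$ to be the nearer of $0$ and $1$, so that $\theta_i\in\{0,1\}$ in every case and $|\mu_i^*-\theta_i|\le\frac12$ for $i\in F$. Since $\mu^*\in P$ gives $\sum_i\mu_i^* x_i=\sum_i\lambda_i x_i$, the contributions of the non-fractional coordinates cancel and
$$
\sum_{i=1}^n(\lambda_i-\theta_i)x_i=\sum_{i=1}^n(\mu_i^*-\theta_i)x_i=\sum_{i\in F}(\mu_i^*-\theta_i)x_i.
$$
Taking norms and using $|F|\le d$ together with $|\mu_i^*-\theta_i|\le\frac12$ yields
$$
\Bigl\|\sum_{i=1}^n(\lambda_i-\theta_i)x_i\Bigr\|\le\sum_{i\in F}|\mu_i^*-\theta_i|\,\|x_i\|\le\frac{|F|}{2}\max_i\|x_i\|\le\frac{\dim X}{2}\max_i\|x_i\|,
$$
which is the desired estimate.

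The main obstacle is the vertex-counting step: one must argue rigorously that an extreme point of $P$ leaves at most $d$ coordinates strictly inside $(0,1)$. The cleanest route is the linear-algebraic description of extreme points of a polyhedron, namely that an extreme point is uniquely determined by the subsystem of its active constraints, forcing the active normals to span $\mathbb R^n$; the rank bound $\le d$ on the equality block then gives at least $n-d$ active box constraints. I would also make sure the degenerate cases (the equality system having rank strictly below $d$, or $n\le d$, where the assertion is immediate) are absorbed harmlessly into the inequality $|F|\le d$.
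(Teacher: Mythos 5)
Your argument is correct and is essentially the standard proof of this rounding lemma; the paper itself offers no proof, citing it as well known from Kadets--Kadets (Lemma~2.1.2) and Popov--Randrianantoanina (Lemma~10.20), and the proof in those sources is exactly your extreme-point argument: take a vertex of the polytope $P=\{\mu\in[0,1]^n:\sum_i\mu_i x_i=\sum_i\lambda_i x_i\}$, observe that at most $\dim X$ of its coordinates can be fractional because the equality block has rank at most $\dim X$, and round those to the nearest integer. One caveat: your proof (like the classical lemma) requires $\lambda_i\in[0,1]$, whereas the statement as printed says $0\leq\lambda_i\leq n$; as written the claim is false (take $X=\mathbb R$, $n=2$, $x_1=x_2=1$, $\lambda_1=\lambda_2=2$), so the hypothesis is evidently a typo for $0\leq\lambda_i\leq 1$, which is the version you prove and the only version the paper actually uses (it applies the lemma with $\lambda_i=\tfrac12$ in Theorem~\ref{th:1}).
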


\begin{theorem} \label{th:1}
Let $X$ be a K\"{o}the $F$-space on $(\Omega, \Sigma, \mu)$, $Y_1, Y_2$ be $F$-spaces, $T_1\in\mathcal L(X,Y_1)$ be a narrow operator, $T_2\in\mathcal L(X,Y_2)$ be a finite rank narrow operator and $\sigma, \varepsilon>0$. Then there exists an of mean zero sign $x$ on $\Omega$ such that $\|T_1x\|\leq \sigma$ and $\|T_2x\|\leq \varepsilon$.
\end{theorem}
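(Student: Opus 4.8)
The plan is to reduce everything to a finite partition supplied by Proposition~\ref{pr:3} and then to exploit the finite-dimensionality of the range of $T_2$ through the rounding Lemma~\ref{l:1}. We may assume $T_2\neq 0$, since otherwise the assertion is exactly the narrowness of $T_1$. Put $E=T_2(X)$, a finite-dimensional subspace of $Y_2$ of dimension $d\geq 1$, and fix on $E$ a genuine norm $\|\cdot\|_E$. On the finite-dimensional space $E$ the $F$-norm inherited from $Y_2$ and the norm $\|\cdot\|_E$ induce the same topology, so we may first choose $\rho>0$ with $\|v\|\leq\varepsilon$ whenever $v\in E$ and $\|v\|_E\leq\rho$, and then choose $\varepsilon_0>0$ with $\|v\|_E\leq\rho/d$ whenever $v\in E$ and $\|v\|\leq\varepsilon_0$.

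Next I would apply Proposition~\ref{pr:3} to $T_2$. Indeed, $T_2$ is narrow by hypothesis, and since $|x|\leq\mathbf 1_\Omega$ for each sign $x$, the set $Z$ of all signs is bounded in $X$; as $T_2$ is continuous with finite-dimensional range, $\overline{T_2(Z)}$ is a bounded, hence compact, subset of $E$. Thus Proposition~\ref{pr:3} applies with the parameter $\varepsilon_0$ and yields a partition $\Omega=\bigsqcup_{k=1}^{n}A_k$ such that $\|T_2x\|\leq\varepsilon_0$, and consequently $\|T_2x\|_E\leq\rho/d$, for every sign $x$ supported in a single $A_k$. Discarding (by absorbing into a neighbour) those $A_k$ of measure zero, we may assume $A_k\in\Sigma^+$ for each $k$. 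Using the narrowness of $T_1$ on each piece, I would then pick for every $k$ a mean zero sign $x_k$ on $A_k$ with $\|T_1x_k\|\leq\sigma/n$; the vectors $v_k:=T_2x_k$ lie in $E$ and satisfy $\|v_k\|_E\leq\rho/d$.

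The decisive step is to combine the $x_k$ into a single mean zero sign on $\Omega$ while keeping $T_2$ small. Any choice of signs $\epsilon_k\in\{-1,1\}$ makes $x=\sum_{k=1}^{n}\epsilon_k x_k$ a mean zero sign on $\Omega$ (its support is $\bigsqcup_k A_k=\Omega$ and $\int x\,d\mu=\sum_k\epsilon_k\int x_k\,d\mu=0$), and automatically $\|T_1x\|\leq\sum_k\|T_1x_k\|\leq\sigma$. To control $T_2x=\sum_k\epsilon_k v_k$, I would apply Lemma~\ref{l:1} in $(E,\|\cdot\|_E)$ with $\lambda_k=\tfrac12$ for all $k$ (legitimate since $\tfrac12\leq n$): it produces $\theta_k\in\{0,1\}$ with $\bigl\|\sum_k(\tfrac12-\theta_k)v_k\bigr\|_E\leq\tfrac{d}{2}\max_k\|v_k\|_E$. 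Setting $\epsilon_k=2\theta_k-1$ turns this into $\|\sum_k\epsilon_k v_k\|_E\leq d\max_k\|v_k\|_E\leq d\cdot\rho/d=\rho$, whence $\|T_2x\|\leq\varepsilon$ by the choice of $\rho$.

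I expect the main obstacle to be precisely the point that the number $n$ of pieces delivered by Proposition~\ref{pr:3} depends on the accuracy $\varepsilon_0$, so the naive estimate $\|T_2x\|\leq\sum_k\|T_2x_k\|\leq n\varepsilon_0$ is circular and useless. The role of Lemma~\ref{l:1} is exactly to break this circularity: it replaces the factor $n$ by the fixed dimension $d$ of the range of $T_2$, so that one may safely fix $\varepsilon_0$ (through $d$ and $\rho$) before $n$ is ever produced. A minor technical care is the passage between the $F$-norm of $Y_2$ and the auxiliary norm $\|\cdot\|_E$ on $E$, which is legitimate only because all linear topologies on the finite-dimensional space $E$ coincide.
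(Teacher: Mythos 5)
Your proof is correct and follows essentially the same route as the paper's: apply Proposition~\ref{pr:3} to $T_2$ measured in an auxiliary genuine norm on the finite-dimensional range, pick mean zero signs on the pieces using the narrowness of $T_1$, and invoke Lemma~\ref{l:1} with $\lambda_k=\tfrac12$ to choose the $\pm1$ coefficients so that the factor $n$ is replaced by $\dim T_2(X)$. You are in fact a bit more explicit than the paper on two points it leaves tacit, namely why $\overline{T_2(Z)}$ is compact (so that Proposition~\ref{pr:3} applies) and the comparison between the $F$-norm of $Y_2$ and the auxiliary norm on the range.
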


\begin{proof}
 Set $m={\rm dim}T_2(X)$ and choose $\delta>0$ such that for each $y\in T_2(X)$ the inequality $p(y)\leq \delta$ implies the inequality $\|y\|\leq \varepsilon$, where $p$ is a fixed norm on $T_2(X)$. Using Proposition \ref{pr:3} we choose a partition $\Omega=\bigsqcup\limits_{k=1}^n A_n$ of $\Omega$ into measurable sets $A_k\in \Sigma$ such that $p(T_2 x)\leq \frac{\delta}{2m}$ for every $k\leq n$ and every sign $x$ with ${\rm supp}\,x\subseteq A_k$. Using the narrowness of $T_1$ for every $k\leq n$ we choose a mean zero sign $x_k$ on $A_k$ such that $\|T_1x_k\| \leq \frac{\sigma}{2^{k}}$. Note that $p(T_2 x_k)\leq \frac{\delta}{m}$. Using Lemma \ref{l:1} we choose a collection $(\theta_k)_{k=1}^{n}$, $\theta_k\in \{-1,1\}$ such that $p(\sum\limits_{k=1}^{n}\theta_k T_2 x_k)\leq \delta$. Set $x=\sum\limits_{k=1}^{n}\theta_k \,x_k$. It is clear that $x$ is a mean zero sign on $\Omega$. By the choice $\delta$ we have $\|T_2 x\|\leq \varepsilon$. Moreover, $$\|T_1 x\|\leq\sum\limits_{k=1}^{n}\|T_1 x_k\| \leq\sum\limits_{k=1}^{n} \frac{\varepsilon}{2^{k}}\leq \sigma.$$
$\Box$\end{proof}

\begin{cor} \label{cor:2}
Let $X$ be a K\"{o}the $F$-space on $(\Omega, \Sigma, \mu)$, $Z$ be the set of all signs in $X$, $Y$ be an $F$-space, $T_1\in\mathcal L(X,Y)$ be a narrow operator, $T_2\in\mathcal L(X,Y)$ be an operator, the restriction of which $T_2|_{Z}$ is a uniform limit of a sequence $(S_n|_{Z})_{n=1}^\infty$ where all operators $S_n\in\mathcal L(X,Y)$ are narrow and finite rank. Then the operator $T=T_1+T_2$ is narrow.
\end{cor}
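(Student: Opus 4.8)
The plan is to verify the definition of narrowness directly for $T=T_1+T_2$. Fix $A\in\Sigma^+$ and $\varepsilon>0$; the goal is to produce a mean zero sign $x$ on $A$ with $\|Tx\|<\varepsilon$. The idea is to absorb the ``bad'' operator $T_2$ by a nearby finite rank narrow operator $S_n$, apply Theorem~\ref{th:1} to the pair $(T_1,S_n)$, and then control the error $T_2-S_n$ using that the sign we construct lies in $Z$.

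First I would exploit the uniform convergence hypothesis: since $\sup_{z\in Z}\|T_2z-S_nz\|\to 0$, choose $n$ so large that $\|T_2z-S_nz\|\leq\frac{\varepsilon}{3}$ for every sign $z\in Z$. Because every $\{-1,0,1\}$-valued function is dominated by $\mathbf{1}_\Omega\in X$, every mean zero sign on $A$ belongs to $Z$, so this bound will be available for the sign eventually produced. Next I would apply Theorem~\ref{th:1} to the narrow operator $T_1$ and the finite rank narrow operator $S_n$, with tolerances $\sigma=\frac{\varepsilon}{3}$ and $\frac{\varepsilon}{3}$. Since narrowness must be checked on the prescribed set $A$ rather than on all of $\Omega$, I would run the theorem in the restricted K\"othe $F$-space $X_A=\{x\in X:{\rm supp}\,x\subseteq A\}$, regarded as a K\"othe $F$-space on $A$; the restrictions of $T_1$ and $S_n$ remain narrow (narrowness on $X$ supplies mean zero signs on every $B\in\Sigma^+$ with $B\subseteq A$, and such signs lie in $X_A$), and $S_n|_{X_A}$ is still of finite rank. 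This yields a mean zero sign $x$ on $A$ with $\|T_1x\|\leq\frac{\varepsilon}{3}$ and $\|S_nx\|\leq\frac{\varepsilon}{3}$.

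Finally, writing $T_2x=S_nx+(T_2-S_n)x$ and using the triangle inequality for the $F$-norm gives
$$\|Tx\|\leq\|T_1x\|+\|S_nx\|+\|(T_2-S_n)x\|\leq\frac{\varepsilon}{3}+\frac{\varepsilon}{3}+\frac{\varepsilon}{3}=\varepsilon,$$
and since $A\in\Sigma^+$ and $\varepsilon>0$ were arbitrary, $T$ is narrow.

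The only genuine point requiring care is the localization step. Theorem~\ref{th:1} is stated for mean zero signs on all of $\Omega$, so I expect the main obstacle to be confirming that it applies \emph{verbatim} on an arbitrary measurable subset $A$, i.e.\ that passing to $X_A$ preserves the hypotheses (narrowness of $T_1|_{X_A}$, finite rank and narrowness of $S_n|_{X_A}$). Together with the observation that the uniform approximation hypothesis on $Z$ bounds $\|(T_2-S_n)x\|$ for the \emph{particular} sign $x$ delivered by the theorem, these are the facts that make the three-term estimate legitimate.
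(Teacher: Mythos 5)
Your argument is correct and is exactly the intended derivation: the paper states Corollary~\ref{cor:2} without proof as an immediate consequence of Theorem~\ref{th:1}, and the evident route is precisely yours --- pick $n$ with $\sup_{z\in Z}\|T_2z-S_nz\|\leq\varepsilon/3$, apply Theorem~\ref{th:1} (localized to $A$ via the K\"othe $F$-space $X_A$) to the pair $(T_1,S_n)$, and conclude by the triangle inequality. Your attention to the localization step and to the fact that the produced sign lies in $Z$ fills in the details the paper leaves implicit.
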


\section{The sum of a narrow and a compact operators}

\begin{prop} \label{pr:4}
Let $I$ be a well-ordered set, $X$ be an $F$-space with a transfinite basis $(e_i:i\in I)$. Then the coefficient functionals are continuous and the family $(f_i:i\in I)$ is equicontinuous if $\|e_i\|\geq 1$ for every $i\in I$.

If, moreover, $K$ is a compact subset of $X$ and $\varepsilon>0$, then there exists a finite set $J\subseteq I$ such that $\|x-\sum_{i\in J}f_i(x)e_i\|\leq \varepsilon$ for every $x\in K$.
\end{prop}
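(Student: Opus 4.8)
The plan is to treat the two assertions separately, deriving everything from the equicontinuity of the \emph{initial-segment} projections $S_\alpha x=\sum_{i<\alpha}f_i(x)e_i$ (sums taken over initial segments of the well-order $I$), which I regard as the main tool.

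First I would prove equicontinuity of $(S_\alpha)$ by the classical renorming argument adapted to $F$-spaces. Introduce $|||x|||=\sup_\alpha\|S_\alpha x\|$. This is finite, because the map $\alpha\mapsto S_\alpha x$ is continuous from the compact ordinal interval $[0,\mathrm{ot}(I)]$ (order topology) into $X$ — continuity at successors is trivial and at limit ordinals it holds by the very definition of the transfinite sum — so its range is compact, hence bounded. One checks that $|||\cdot|||$ is an $F$-norm dominating $\|\cdot\|$ (indeed $\|x\|=\lim_\alpha\|S_\alpha x\|\le|||x|||$) and that $(X,|||\cdot|||)$ is complete by the usual coordinatewise Cauchy argument. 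Since both $(X,\|\cdot\|)$ and $(X,|||\cdot|||)$ are $F$-spaces and the identity from the latter to the former is continuous, the open mapping theorem for $F$-spaces gives equivalence of the two metrics; thus $\sup_\alpha\|S_\alpha x\|=|||x|||$ is controlled by $\|x\|$, i.e.\ $(S_\alpha)$ is equicontinuous. Continuity of each $f_i$ then follows from $f_i(x)e_i=(S_{i^+}-S_i)x$, where $i^+$ is the successor of $i$, together with the fact that $\lambda\mapsto\lambda e_i$ is a homeomorphism of $\mathbb K$ onto the one-dimensional (hence uniquely topologized) subspace $\mathbb K e_i$. For the equicontinuity of $(f_i)$ I would extract the scalar: since $\|\lambda e_i\|=\||\lambda|\,e_i\|$ and $\|\mu y\|\le\|y\|$ for $|\mu|\le1$, the bound $\|f_i(x)e_i\|<\delta\le1$ forces $|f_i(x)|<1$, and then $\|\varepsilon e_i\|\le\|f_i(x)e_i\|$ whenever $|f_i(x)|\ge\varepsilon$; the normalization $\|e_i\|\ge1$ is what secures $\inf_i\|\varepsilon e_i\|>0$, turning the equicontinuity of $(S_{i^+}-S_i)$ into a uniform bound on $|f_i|$.

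For the second assertion the key preliminary step (Step~A) is the uniform approximation of a compact set by a \emph{single} initial segment: if $K$ is compact and $\beta$ a limit ordinal with $S_\beta|_K=\mathrm{id}$, then for every $\eta>0$ there is $\gamma<\beta$ with $\sup_{x\in K}\|x-S_\gamma x\|\le\eta$. I would prove this by the $\varepsilon/3$-argument: using equicontinuity of $(I-S_\alpha)$, fix a neighbourhood of $0$ on which $\|(I-S_\alpha)z\|\le\eta$ for all $\alpha$; for each $x\in K$ pick $\gamma(x)<\beta$ with $\|(I-S_{\gamma(x)})x\|$ small and a neighbourhood of $x$ on which this persists; cover $K$ by finitely many such neighbourhoods, say around $x_1,\dots,x_p$, and set $\gamma_0=\max_j\gamma(x_j)<\beta$. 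The projection identity $(I-S_{\gamma_0})(I-S_{\gamma(x_j)})=I-S_{\gamma_0}$ — valid since $\gamma(x_j)\le\gamma_0$ and the $S_\alpha$ are nested commuting projections — together with equicontinuity yields the desired uniform bound.

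Finally I would obtain the finite set $J$ by transfinite induction on the least $\beta$ with $S_\beta|_K=\mathrm{id}$, proving: for every compact $K$ supported in $I_\beta$ and every $\varepsilon>0$ there is a finite $J\subseteq I_\beta$ with $\sup_{x\in K}\|x-P_Jx\|\le\varepsilon$, where $P_Jx=\sum_{i\in J}f_i(x)e_i$. The case $\beta=0$ is trivial. If $\beta=\gamma+1$ with top basis vector $e_g$, then $x=S_\gamma x+f_g(x)e_g$ on $K$; apply the induction hypothesis to the compact set $S_\gamma(K)\subseteq I_\gamma$ to get a finite $J_0$, and put $J=J_0\cup\{g\}$ — the top coordinate cancels exactly, so $x-P_Jx=S_\gamma x-P_{J_0}x$ and the bound is inherited with no loss. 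If $\beta$ is a limit, use Step~A to find $\gamma_0<\beta$ with $\sup_K\|x-S_{\gamma_0}x\|\le\varepsilon/2$, apply the induction hypothesis to the compact set $S_{\gamma_0}(K)\subseteq I_{\gamma_0}$ to get a finite $J$ with $\sup_K\|S_{\gamma_0}x-P_Jx\|\le\varepsilon/2$, and add the two estimates. Taking $\beta=\mathrm{ot}(I)$, for which $S_\beta=\mathrm{id}$, gives the proposition. The main obstacle is exactly the passage from an infinite initial segment to a \emph{finite} $J$: a naive union of per-point finite sets fails, since the basis need not be unconditional, and it is the transfinite induction — peeling one coordinate at successors and \emph{descending} to a strictly smaller index at limits via Step~A — that circumvents the conditional-summation difficulty by only ever manipulating nested initial-segment projections.
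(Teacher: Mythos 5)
Your proof is correct and follows essentially the same route as the paper: the renorming $\sup_\alpha\|S_\alpha x\|$ plus the open mapping theorem for the first part, and for the second part uniform-on-compacts convergence of the initial-segment projections at limit ordinals combined with well-foundedness of $I$. The only difference is organizational — the paper runs an explicit descending chain of limit elements $j_1>j_2>\dots$ and collects the finite gaps between them, while you package the same descent as a transfinite induction (and prove directly, as your Step~A, the uniform approximation fact that the paper simply cites from Schaefer).
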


\begin{proof} We argue analogously as in the case of a separable Banach space $X$ (see \cite[p.115]{Sh1}). Let $I_0=I\sqcup \{i_0\}$ and $i<i_0$ for every $i\in I$. Consider the function $$p(x)=\sup_{i\in I_0}\|\sum_{j< i}f_i(x)e_i\|,$$ which is an $F$-norm on $X$. Note that $\|x\|\leq p(x)$ for every $x\in X$ and the space $(X,p)$ is complete. According to Banach Theorem on homomorphism, the identity mapping $id:(X,\|\cdot\|)\to (X,p)$ is continuous. Therefore for every $n\in\mathbb N$ there exists $\delta>0$ such that the inequality $\|x\|<\delta$ implies $p(x)\leq \frac{1}{2n}$. Then $\|f_i(x)e_i\|\leq \frac{1}{n}$. Hence, $f_i$ is continuous and $|f_i(x)|\leq \frac{1}{n}$ if $\|e_i\|\geq 1$. Moreover, the family $(T_{i,j}:i<j\leq i_0)$ of operators $T_{i,j}:X\to X$, $T_{i,j}(x)=\sum_{i\leq \alpha < j}f_\alpha(x)e_\alpha$, is equicontinuous too.

Now we prove the second part. Let $j_1$ be the maximal element of the set $J_1$ of all limited elements in $I_0$ ($j_1$ exists being equal the minimal element of the set $\{i\in I_0: (\forall j\in J_1)\,(j\leq i)\}$) containing $i_0$. It follows from \cite[p.~86]{Sh1} that there exists $i_1<j_1$ such that $\|T_{i_1,j_1}x\|\leq \frac{\varepsilon}{2}$ for every $x\in K$. Let $j_2$ be the maximal element of the set $J_2$ of all limit elements in $I_1=\{i\in I_0:i\leq i_1\}$. Note that $j_2<j_1$ and the set $\{i\in I_0:j_2\leq i< i_1\}$ is finite. Using \cite[p.~86]{Sh1} we choose $i_2<j_2$ such that $\|T_{i_2,j_2}x\|\leq \frac{\varepsilon}{2^2}$ for every $x\in K$. And so on. Since the set $I_0$ does not contain a strictly decreasing sequence, there exists an $n\in\mathbb N$ such that the set $J_{n+1}$ of all limit elements in $I_n=\{i\in I_0:i< i_n\}$ is empty, that is $I_n$ is finite. It remains to put $J=I_n\cup\left(\cup_{1\leq k\leq n-1}\{i\in I_0:j_{k+1}\leq i< i_k\}\right)$.
$\Box$\end{proof}

\begin{theorem} \label{th:3}
Let $X$ be a K\"{o}the $F$-space on $(\Omega, \Sigma, \mu)$, $Z$ be the set of all signs in $X$, $Y$ be an $F$-space with a transfinite basis, $T_1\in\mathcal L(X,Y)$ be a narrow operator, $T_2\in\mathcal L(X,Y)$ be a narrow operator such that the set $K=T_2(Z)$ is a relatively compact set in $Y$. Then the operator $T=T_1+T_2$ is narrow.
\end{theorem}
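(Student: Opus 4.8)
The plan is to reduce the statement to Corollary~\ref{cor:2} by approximating $T_2$ on the set $Z$ of signs by narrow finite rank operators, and to manufacture these approximants from the transfinite basis of $Y$ together with the compactness of $\overline{K}$, where $K=T_2(Z)$.

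First I would fix a transfinite basis $(e_i:i\in I)$ of $Y$ with associated coefficient functionals $(f_i:i\in I)$. By Proposition~\ref{pr:4} every $f_i$ is continuous, so for each finite $J\subseteq I$ the partial-sum map $P_J:Y\to Y$, $P_Jy=\sum_{i\in J}f_i(y)e_i$, is a continuous finite rank projection. Applying the second part of Proposition~\ref{pr:4} to the compact set $\overline{K}$, for each $n\in\mathbb N$ I obtain a finite set $J_n\subseteq I$ with $\|y-P_{J_n}y\|\le\frac1n$ for all $y\in\overline{K}$. Setting $S_n=P_{J_n}\circ T_2$ gives finite rank operators in $\mathcal L(X,Y)$ satisfying $\sup_{x\in Z}\|T_2x-S_nx\|\le\frac1n$, since $T_2x\in\overline{K}$ for every $x\in Z$; hence $S_n|_Z\to T_2|_Z$ uniformly.

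Next I would verify that each $S_n$ is narrow. This is the step where the continuity of $P_{J_n}$ is used: given $A\in\Sigma^+$ and $\varepsilon>0$, continuity of $P_{J_n}$ at the origin supplies $\eta>0$ with $\|P_{J_n}y\|<\varepsilon$ whenever $\|y\|<\eta$, and the narrowness of $T_2$ supplies a mean zero sign $x$ on $A$ with $\|T_2x\|<\eta$; then $\|S_nx\|=\|P_{J_n}T_2x\|<\varepsilon$. Thus $(S_n)$ is a sequence of narrow finite rank operators whose restrictions to $Z$ converge uniformly to $T_2|_Z$, and Corollary~\ref{cor:2} yields at once that $T=T_1+T_2$ is narrow.

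I expect the only substantial ingredient to be the compact-approximation part of Proposition~\ref{pr:4}; the remainder is routine bookkeeping. The conceptual heart of the argument, and the point most prone to error, is the passage from the relative compactness of $T_2(Z)$ to a \emph{single} finite rank operator approximating $T_2$ uniformly over \emph{all} of $Z$ at once --- this is precisely what the transfinite basis delivers, and it is what replaces the absolute continuity of the norm underlying the classical results on $L_\infty$. One minor point worth noting is that no normalization of the basis is needed: the continuity of the individual functionals $f_i$, which is all the argument requires, holds unconditionally, whereas the hypothesis $\|e_i\|\ge 1$ in Proposition~\ref{pr:4} only serves to make the family $(f_i)$ equicontinuous.
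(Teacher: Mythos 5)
Your proposal is correct and follows exactly the route of the paper, which proves this theorem by combining Proposition~\ref{pr:4} with Corollary~\ref{cor:2}; you have merely written out the details (construction of the projections $P_{J_n}$, narrowness of $S_n=P_{J_n}\circ T_2$, uniform convergence on $Z$) that the paper leaves implicit. Your side remark that the normalization $\|e_i\|\ge 1$ is only needed for equicontinuity, not for the continuity of the individual coefficient functionals, is also consistent with the statement of Proposition~\ref{pr:4}.
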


\begin{proof} The proof follows immediately from Proposition \ref{pr:4} and Corollary \ref{cor:2}.
$\Box$\end{proof}

\begin{theorem} \label{th:2}
Let $X$ be a K\"{o}the $F$-space on $(\Omega, \Sigma, \mu)$, $Z$ be the set of all signs in $X$, $Y$ be a locally convex $F$-space, $T_1\in\mathcal L(X,Y)$ be a narrow operator, $T_2\in\mathcal L(X,Y)$ be a narrow operator such that the set $K=T_2(Z)$ is relatively compact in $Y$. Then the operator $T=T_1+T_2$ is narrow.
\end{theorem}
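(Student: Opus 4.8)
The plan is to reduce Theorem~\ref{th:2} to the finite rank case already handled in Theorem~\ref{th:1}. Exactly as Corollary~\ref{cor:2} is deduced from Theorem~\ref{th:1}, it suffices to prove the following $\Omega$-version and then apply it, after restricting all the data to an arbitrary $A\in\Sigma^+$, on each such $A$: for all $\sigma,\varepsilon>0$ there is a mean zero sign $x$ on $\Omega$ with $\|T_1x\|\le\sigma$ and $\|T_2x\|\le\varepsilon$. Indeed, the subadditivity of the $F$-norm then gives $\|Tx\|\le\|T_1x\|+\|T_2x\|\le\sigma+\varepsilon$, and letting $A$ range over $\Sigma^+$ and $\sigma+\varepsilon\to 0$ yields narrowness of $T=T_1+T_2$. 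So I fix $\sigma,\varepsilon>0$ and prove the $\Omega$-version.

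The hard point is that $T_2$ need not be approximable by finite rank operators, since $Y$ need not have the approximation property; thus I cannot simply imitate the transfinite-basis argument of Theorem~\ref{th:3}. The idea is instead to exploit that the values $T_2x$ on signs $x$ lie \emph{a priori} in one fixed compact set, so that finitely many continuous functionals already control the $F$-norm of $T_2x$. Since $Y$ is a locally convex $F$-space it is complete and locally convex, so $\overline{K}=\overline{T_2(Z)}$ is compact and its closed absolutely convex hull $C$ is again a compact absolutely convex subset of $Y$. I choose an absolutely convex open neighbourhood $V$ of $0$ with $V\subseteq\{y\in Y:\|y\|<\varepsilon\}$ and, using local convexity, produce $g_1,\dots,g_N\in Y^{*}$ with
\[
\bigl(z\in C\ \text{and}\ |g_j(z)|\le 1,\ j\le N\bigr)\ \Longrightarrow\ z\in V. \qquad(\star)
\]
To obtain $(\star)$, note that $C\setminus V$ is compact; for each $w\in C\setminus V$ the Hahn--Banach theorem gives $g_w\in Y^{*}$ with $\mathrm{Re}\,g_w(w)>1$ and $|g_w|\le 1$ on $V$; finitely many of the open sets $\{\mathrm{Re}\,g_w>1\}$ cover $C\setminus V$, and the corresponding functionals $g_1,\dots,g_N$ satisfy $(\star)$.

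With these functionals fixed, I would define $\Phi:X\to\mathbb K^{N}$ by $\Phi x=(g_1(T_2x),\dots,g_N(T_2x))$, where $\mathbb K^{N}$ carries the norm $\|(\zeta_j)\|=\max_j|\zeta_j|$. Each $g_j\circ T_2$ is a continuous linear functional on $X$, so $\Phi$ is a continuous finite rank operator; moreover $\Phi$ is narrow, being the composition of the narrow operator $T_2$ with the continuous linear map $y\mapsto(g_j(y))_j$. Applying Theorem~\ref{th:1} to the narrow operator $T_1$ and the finite rank narrow operator $\Phi$, with the numbers $\sigma$ and $1$, yields a mean zero sign $x$ on $\Omega$ with $\|T_1x\|\le\sigma$ and $\|\Phi x\|\le 1$, that is $|g_j(T_2x)|\le 1$ for every $j\le N$.

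It then remains to read off the bound on $T_2x$: since $x$ is a sign, $T_2x\in T_2(Z)\subseteq\overline{K}\subseteq C$, and $|g_j(T_2x)|\le 1$ for all $j$, so $(\star)$ forces $T_2x\in V$ and hence $\|T_2x\|<\varepsilon$. Together with $\|T_1x\|\le\sigma$ this gives the $\Omega$-version and completes the argument. I expect the only genuinely delicate step to be the construction of $(\star)$, where local convexity is indispensable: it is the passage from the infinite dimensional compact set $C$ to a finite family $g_1,\dots,g_N$ that here plays the role that the transfinite basis played in Theorem~\ref{th:3}, and it is precisely the \emph{a priori} membership $T_2x\in C$ that makes finitely many functionals sufficient, thereby bypassing the approximation property.
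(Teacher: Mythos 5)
Your argument is correct and follows essentially the same route as the paper: both proofs use the compactness of $\overline{T_2(Z)}$ together with Hahn--Banach separation in the locally convex space $Y$ to produce finitely many continuous functionals $g_1,\dots,g_N$ whose smallness on $T_2x$ forces $\|T_2x\|$ to be small, and then apply Theorem~\ref{th:1} to $T_1$ and the finite rank narrow operator $x\mapsto (g_j(T_2x))_{j=1}^N$. The differences are cosmetic --- the paper covers $K\setminus\{y:\|y\|\le\varepsilon/2\}$ by translates $y_k+V$ and separates each from $V$, whereas you cover $\overline{K}\setminus V$ by half-spaces; note that the passage to the closed absolutely convex hull $C$ is unnecessary ($\overline{K}\setminus V$ is already compact), and in $(\star)$ you should leave a margin (e.g.\ require $|g_j(z)|\le\tfrac12$ and take $\|\Phi x\|\le\tfrac12$), since for $w$ on the boundary of $V$ Hahn--Banach only yields $\mathrm{Re}\,g_w(w)\ge 1$ rather than the strict inequality needed for your open cover.
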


\begin{proof}
It is sufficient to prove that for every $\varepsilon>0$ there exists a mean zero sign $x$ on $\Omega$ such that $\|T x\|\leq \varepsilon$.

Fix $\varepsilon>0$ and show that there is a mean zero sign $x$ on $\Omega$ such that $\|T_1 x\|\leq \frac{\varepsilon}{2}$ and $\|T_2 x\|\leq \frac{\varepsilon}{2}$.

Choose an absolutely convex open 0-neighborhood $V$ in $Y$ such that $\|y\|\leq \frac{\varepsilon}{5}$ for every $y\in V$. If $K_1=K\setminus \{y\in Y:\|y\|\leq \frac{\varepsilon}{2}\}=\emptyset$ then using the narrowness of $T_1$ it is sufficient to choose a mean zero sign $x$ on $\Omega$ such that $\|T_1 x\|\leq \frac{\varepsilon}{2}$.

Now let $K_1\ne\emptyset$. Choose $y_1,\dots ,y_n\in K_1$ such that $K_1\subseteq \bigcup\limits_{k=1}^{n}(y_k+V)$. Note that $(y_k+V)\cap V=\emptyset$ for every $k\leq n$. Then for every $k\leq n$ there exists a linear continuous functional $f_k:Y\to\mathbb R$ which strictly separates the sets $y_k+V$ and $V$ (see, for example, \cite[p.64]{Sh}). Without loss of generality we may and do assume that $f_k(y_k)=1$, moreover, $f_k(y)<\frac{1}{2}$ for every $y\in V$. Consider the continuous linear operators $S:Y\to l_\infty^n$, $Sy=(f_k(y))_{k=1}^n$ and $S_1:X\to l_\infty^n$, $S_1(x)=S(T_2x)$. Since the operator $T_2$ is narrow, the operator $S_1$ is narrow too. Therefore by Theorem \ref{th:1}, there exists a mean zero sign $x$ on $\Omega$ such that $\|T_1x\|\leq \frac{\varepsilon}{2}$ and $\|S_1x\|\leq \frac{1}{2}$.

Show that $\|T_2x\|\leq \frac{\varepsilon}{2}$. Assume that $\|T_2x\|> \frac{\varepsilon}{2}$. Then $y_0=T_2x\in K_1$ and there exists an $k\leq n$ such that $y_0\in (y_k+V)$. Therefore $y_k-y_0\in V$, $f_k(y_k-y_0)<\frac{1}{2}$ and $f_k(y_0)>f_k(y_k)-\frac{1}{2}=\frac{1}{2}$. Thus $\|S_1x\|\geq f_k(T_2(x))=f_k(y_0)>\frac{1}{2}$, which implies a contradiction.
$\Box$\end{proof}

\section{Examples, a question}

The conclusion of Proposition \ref{pr:3} is valid for every linear continuous operator $T\in\mathcal L(X,Y)$ with $\lim\limits_{\mu(A)\to 0}\|T\mathbf{1}_A\|=0$, in particular, if $X$ has an absolute continuous norm of the unit. The next example shows that the conclusion of Proposition \ref{pr:3} cannot be replaced with the following stronger conclusion:
{\it for every $\varepsilon>0$ there is $\delta>0$ such that $\|T\mathbf{1}_A\|\leq \varepsilon$ if $\mu(A)\leq \delta$}. The example construction is similar to the construction used in \cite[Example 4.2]{MMP}.

\begin{example} \label{prob:3}
There exists a linear continuous strictly narrow functional $f$ on $L_\infty$ such that $f(\mathbf{1}_{A_n})=1$ for some sequence of measurable sets $A_n$ with $\lim\limits_{n\to\infty}\mu(A_n)=0$.
\end{example}

\begin{proof} We make a construction for the space $L_\infty([0,1]^2)$. Denote by $\mu_1$ and $\mu_2$ the Lebesgue measure on $[0,1]$ and $[0,1]^2$ respectively, and by $\Sigma_1$ and $\Sigma_2$ the system of all measurable subsets of $[0,1]$ and $[0,1]^2$ respectively. Let $P:L_\infty([0,1]^2)\to L_\infty([0,1])$ be the conditional expectation operator which is defined by $Px(t)=\int\limits_{[0,1]}x(t,s)d\mu_1(s)$. It follows from \cite[Theorem 4.10]{PR} that $P$ is strictly narrow. Choose a maximal system ${\mathcal A}\subseteq \Sigma_1$ such that \mbox{$\mu_1(\bigcap\limits_{k=1}^n B_k)>0$} for every $n\in\mathbb N$ and $A_1,\dots , A_n\in {\mathcal A}$. The maximality of ${\mathcal A}$ implies that either $A\in {\mathcal A}$, or $B\in {\mathcal A}$ for every $A,B\in\Sigma_1$ with $[0,1]=A\bigsqcup B$. Moreover, all the sets $A\in\Sigma_1$ with $\mu_1(A)=1$ belong to the system ${\mathcal A}$. Consider the finitely additive measure $\lambda:\Sigma \to \{0,1\}$ on $[0,1]$ defined by
$$\lambda(A)=\left\{\begin{array}{ll}
                         1, & A\in {\mathcal A}\\
                         0, & A\not\in {\mathcal A}.
                       \end{array}
 \right.$$ Observe that the formula $g(x)=\int x d\lambda$ defines a linear continuous functional $g$ on $L_\infty([0,1])$ and the formula $f(x)=g(Px)$ defines a linear continuous strictly narrow functional $f$ on $L_\infty([0,1]^2)$.

Choose a sequence of sets $B_n\in {\mathcal A}$ such that $\mu_1(B_n)\leq \frac{1}{n}$ and set $A_n=B_n\times [0,1]$ for every $n\in\mathbb N$. It is clear that
$\lim\limits_{n\to\infty}\mu_2(A_n)=0$ and $f(\mathbf{1}_{A_n})=g(\mathbf{1}_{B_n})=1$ for every $n\in\mathbb N$.
$\Box$\end{proof}

The following example shows that the assumption of the relative compactness of $T(Z)$ in Propositions \ref{pr:3} and \ref{pr:1} is weaker than the compactness of $T$.

\begin{example} \label{prob:4}
There exists a linear continuous strictly narrow non-compact operator $T:L_1\to l_1$ such that the set $T(Z)$ is compact, where $Z$ is the set of all signs in $L_1$.
\end{example}

\begin{proof} For every $n\in\mathbb N$ set $A_n=[\frac{1}{2^n}, \frac{1}{2^{n-1}}]$ and for every $x\in L_1$ set $Tx=\left(\int_{A_n}xd\mu\right)_{n=1}^\infty$. It is clear that the operator $T:L_1\to l_1$ is linear and continuous with $\|T\|=1$. Moreover, for every $A\in \Sigma^+$ there is a mean zero sign $A$ such that $\int_{A_n}xd\mu=0$ for every $n\in\mathbb N$. Thus, $T$ is strictly narrow. Since $|\int_{A_n}zd\mu|\leq \frac{1}{2^n}$ for every $n\in \mathbb N$ and $z\in Z$, the set $T(Z)$ is relatively compact. Finally, $T$ is non-compact, because the standard basis of $l_1$ is contained in the image of the unit ball of $L_1$.
$\Box$\end{proof}

The following question naturally arises in connection with Theorems \ref{th:2} and \ref{th:3}.

\begin{problem} \label{prob:5}
Let $0<p<1$. Is a sum of two narrow operators from $\mathcal L(L_\infty, L_p)$, at least one of which is compact, narrow?
\end{problem}


\end{document}